\theoremstyle{plain}
\newtheorem*{theorem}{Theorem}
\newtheorem*{lemma}{Lemma}
\theoremstyle{definition}
\begin{document}
\title[Heavy-traffic analysis]{Heavy-traffic analysis of the maximum of an asymptotically stable random walk}
\author[Shneer]{Seva Shneer}
\address{Department of Mathematics and Computer Science, Eindhoven University of Technology, P.O.Box 513, 5600 MB Eindhoven, The Netherlands}
\email{shneer@eurandom.tue.nl}

\author[Wachtel]{Vitali Wachtel}
\address{Mathematical Institute, University of Munich, Theresienstrasse 39, D--80333
Munich, Germany}
\email{wachtel@mathematik.uni-muenchen.de}
\begin{abstract}
For families of random walks $\{S_k^{(a)}\}$ with $\mathbf E S_k^{(a)} = -ka < 0$ we consider their maxima $M^{(a)} = \sup_{k \ge 0} S_k^{(a)}$. We investigate the asymptotic behaviour of $M^{(a)}$ as $a \to 0$ for asymptotically stable random walks. This problem appeared first in the 1960's in the analysis of a single-server queue when the traffic load tends to $1$ and since then is referred to as the heavy-traffic approximation problem. Kingman and Prokhorov suggested two different approaches which were later followed by many authors. We give two elementary proofs of our main result, using each of these approaches. It turns out that the main technical difficulties in both proofs are rather similar and may be resolved via a generalisation of the Kolmogorov inequality to the case of an infinite variance. Such a generalisation is also obtained in this note.
\end{abstract}

\maketitle

Assume that $\{X_i\}_{i=1}^\infty$ is a sequence of i.i.d. random
variables with a zero expectation: $\mathbf EX_1=0$.
Define a random walk
$$
S_0 = 0, \quad S_k = \sum_{i=1}^k X_i \quad \text{for} \quad k \ge
1.
$$
Along with the random walk $\{S_k\}$, for each $a>0$ define a random walk $\{S_k^{(a)}\}$ via
$$
S_k^{(a)} = S_k - ka.
$$
Now we can define
$$
M^{(a)} = \sup_{k \ge 0} S_k^{(a)}.
$$
Since the random walk $S^{(a)}_k$ drifts to $-\infty$,
the maximum $M^{(a)}$ is a proper random variable for each $a>0$.
However, $M^{(a)} \to +\infty$ in probability as $a\to 0$. From this fact
a natural question arises: How fast does $M^{(a)}$ grow as $a\to0$?

It is well-known that the stationary distribution of the waiting
time of a customer in a single-server first-come-first-served
($GI/GI/1$) queue coincides with that of the maximum of a
corresponding random walk. The condition on the mean of the random
walk becoming small ($a\to0$) means in the context of a queue that
the traffic load tends to $1$. Thus, the problem under
consideration may be seen as the investigation of the growth rate
of the stationary waiting-time distribution in a $GI/GI/1$ queue.
This is one of the most important problems in the queueing theory
that is referred to as the heavy-traffic analysis. The question
was first posed by Kingman \cite{Kin61,Kin62} (see also
\cite{Kin65} for an extensive discussion) who considered the case
when $|X_1|$ has an exponential moment and proved that
$$
\mathbf P(aM^{(a)} \leq x) \to 1 - e^{-2x/{\sigma^2}}
$$
for all $x \ge 0$ as $a \to 0$, where $\sigma^2 = Var X_1$. In his proof, Kingman used an analytical approach. Namely, he applied an expression for the Laplace transform of $M^{(a)}$ which follows from the Wiener-Hopf factorisation.

Prokhorov \cite{Prokh63} generalised the latter result to the case when $Var X_1 \in (0,\infty)$. Prokhorov's approach was based on the functional Central Limit Theorem.

These two approaches have become classical and have both been used to prove various heavy-traffic results. However, they seem to have never been compared. Later in this note we shall discuss both approaches in more detail and point out their differences and similarities. The analytical approach was used by Boxma and Cohen \cite{BC99} (see also Cohen \cite{Cohen02}) to study the limiting behaviour of $M^{(a)}$ in the case of an infinite variance. They proved that if $\mathbf P(X_1>x)$ is regularly varying at infinity with a parameter $1<\alpha<2$ (and under some additional assumptions), then there exists a function $\Delta(a)$ such that $\Delta(a) M^{(a)}$ converges in law to a proper random variable. Furrer \cite{Fur99} and Resnick and Samorodnitsky \cite{RS00} proved similar results assuming that the random walk $\{S_n\}$ belongs to the domain of attraction of a spectrally positive stable law and using functional limit theorems. It is worth mentioning that Furrer has computed the corresponding limit distribution explicitly.

The {\em main purpose} of the present note is to determine the asymptotic behaviour of $M^{(a)}$ for any random walk from the domain of attraction of a stable law.

A random walk $\{S_n\}$ is said to belong to the domain of attraction of a stable law with index $\alpha\in(0,2]$, if there exist sequences $b_n$ and $c_n$ such that
$$
\frac{S_n-b_n}{c_n}\to\xi\text{ weakly},
$$
where $\xi$ has the corresponding stable distribution.

It is known that the random walk $\{S_n\}$ belongs to the domain of attraction of a stable law with index $\alpha\in(0,2]$ iff the function
$$
V(x):=\mathbf{E}(X_1^2,|X_1|\leq x),\ x>0
$$
is regularly varying at infinity with index $2-\alpha$. The latter implies that we can choose
$$
c_n:=\inf\Bigl\{u>0:\frac{V(u)}{u^2}\leq\frac{1}{n}\Bigr\},\ n\geq1.
$$
In this case the sequence $\{c_n\}$ is regularly varying with index $1/\alpha$ and, furthermore,
\begin{equation}
\label{eq:cn}
\frac{V(c_n)}{c_n^2} \sim \frac{1}{n}, n \to \infty.
\end{equation}
In this note we consider the case when $\{S_n\}$ belongs to the domain of attraction of a stable law with index $\alpha\in(1,2]$ and $\mathbf{E}X_1=0$. It is known that under these assumptions we can choose $b_n\equiv 0$. Hence, we have the convergence
$$
\frac{S_n}{c_n}\to\xi\text{ weakly}.
$$

Let $\{\xi_t,t\geq0\}$ denote a stable L{\'e}vy process, where $\xi_1$ is equal in law to $\xi$.
We also need to define the supremum of the process $\{\xi_t - t\}_{t \ge0}$ via
$$
M^* = \sup_{t \ge 0} \{\xi_t-t\},
$$
which is a proper random variable as $\xi_t-t$ drifts to $-\infty$ as $t \to \infty$.

For any $\alpha\in(1,2]$ one can choose a positive integer-valued function $n(a)$ such that
\begin{equation}
\label{eq:na}
a n(a) \sim c_{n(a)}\text{ as }a\to0.
\end{equation}
It follows from the regular variation of $\{c_n\}$ that $n(a)$ is regularly varying at zero with index $-\alpha/(\alpha-1)$.
This, in its turn, implies that $c_{n(a)}$ is regularly varying with index $-1/(\alpha-1)$.
\begin{theorem} 
Suppose that $\{S_n\}$ belongs to the domain of attraction of a stable law with index $\alpha\in(1,2]$.
Then, as $a\to 0$,
\begin{equation} \label{eq:result}
\frac{M^{(a)}}{c_{n(a)}} \to M^*\text{ in law.}
\end{equation}
\end{theorem}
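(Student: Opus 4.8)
I would follow Prokhorov's route and obtain \eqref{eq:result} from the functional limit theorem. Write $n=n(a)$; by the regular variation recalled above, $n(a)\to\infty$ and $c_{n(a)}\to\infty$ as $a\to0$. Introduce the rescaled processes
$$
Y^{(a)}(t):=\frac{S^{(a)}_{\lfloor nt\rfloor}}{c_n}=\frac{S_{\lfloor nt\rfloor}}{c_n}-\frac{\lfloor nt\rfloor\,a}{c_n},\qquad t\ge0,
$$
so that $M^{(a)}/c_n=\sup_{t\ge0}Y^{(a)}(t)$. By the functional limit theorem for random walks in the domain of attraction of a stable law, $S_{\lfloor n\cdot\rfloor}/c_n\Rightarrow\xi_\cdot$ in $(D[0,T],J_1)$ for each fixed $T>0$. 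Since $an\sim c_n$ by \eqref{eq:na} and $a/c_n\to0$, the step function $t\mapsto\lfloor nt\rfloor a/c_n$ converges uniformly on $[0,T]$ to the continuous function $t\mapsto t$; hence subtracting it is a negligible perturbation in $J_1$, and composing with the continuous functional $x\mapsto\sup_{0\le t\le T}(x(t)-t)$ on $(D[0,T],J_1)$ gives, via the continuous mapping theorem,
$$
\sup_{0\le t\le T}Y^{(a)}(t)\ \Longrightarrow\ M^*_T:=\sup_{0\le t\le T}(\xi_t-t)\qquad(a\to0).
$$
As $\xi_t-t\to-\infty$ a.s., we have $M^*_T\uparrow M^*<\infty$ a.s. as $T\to\infty$, so $M^*_T\Rightarrow M^*$ as well.

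It then remains to interchange the limits $a\to0$ and $T\to\infty$. From $\sup_{0\le t\le T}Y^{(a)}(t)\le M^{(a)}/c_n$ we get, for every $x$ outside the countable set $N$ of atoms of $M^*$ and of all $M^*_T$ ($T\in\mathbb N$), that $\limsup_{a\to0}\mathbf P(M^{(a)}/c_n\le x)\le\mathbf P(M^*_T\le x)$, and letting $T\to\infty$ yields the upper bound $\le\mathbf P(M^*\le x)$. For the matching lower bound, fix $x\ge0$ with $x\notin N$ (for $x<0$ both distribution functions vanish, since $M^{(a)}\ge S_0^{(a)}=0$ and $M^*\ge0$) and split
$$
\mathbf P\Bigl(\frac{M^{(a)}}{c_n}\le x\Bigr)\ \ge\ \mathbf P\Bigl(\sup_{0\le t\le T}Y^{(a)}(t)\le x\Bigr)-\mathbf P\Bigl(\sup_{t>T}Y^{(a)}(t)>x\Bigr).
$$
Using $M^*_T\le M^*$, the argument is complete once we show that $\mathbf P(\sup_{t>T}Y^{(a)}(t)>x)$ is bounded by a function of $T$ alone (for all small $a$) which vanishes as $T\to\infty$.

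Establishing this overshoot bound is, I expect, the \emph{main obstacle}, and this is precisely where the generalised Kolmogorov inequality of this note enters. Decompose $\{k>nT\}$ into dyadic blocks $n2^jT<k\le n2^{j+1}T$, $j\ge0$. On the $j$-th block $S_k-ka\le\max_{l\le n2^{j+1}T}S_l-n2^jTa$, and since $an\sim c_n$ one has $n2^jTa\ge\tfrac12\,2^jTc_n$ for all sufficiently small $a$; hence
$$
\mathbf P\Bigl(\sup_{t>T}Y^{(a)}(t)>x\Bigr)\ \le\ \sum_{j\ge0}\mathbf P\Bigl(\max_{l\le n2^{j+1}T}S_l>\tfrac12\,2^jTc_n\Bigr).
$$
Apply the generalised Kolmogorov inequality — which bounds $\mathbf P(\max_{l\le m}|S_l|>y)$ by a constant times $mV(y)/y^2$ — with $m=n2^{j+1}T$ and $y=\tfrac12\,2^jTc_n$; then, invoking the regular variation of $V$ with index $2-\alpha$ together with \eqref{eq:cn} (so that $nV(c_n)/c_n^2\to1$), the $j$-th term is, for all small $a$, at most a constant times $(2^jT)^{1-\alpha}$ (up to an arbitrarily small power of $2^jT$ coming from Potter's bounds). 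Here the hypothesis $\alpha>1$ is essential: summing the resulting geometric series gives $\mathbf P(\sup_{t>T}Y^{(a)}(t)>x)\le C\,T^{1-\alpha}\to0$ as $T\to\infty$. Substituting into the preceding display and letting first $a\to0$ and then $T\to\infty$ yields $\liminf_{a\to0}\mathbf P(M^{(a)}/c_n\le x)\ge\mathbf P(M^*\le x)$, which combined with the upper bound shows that $\mathbf P(M^{(a)}/c_n\le x)\to\mathbf P(M^*\le x)$ for every $x\notin N$, hence \eqref{eq:result}. A parallel argument along Kingman's analytical line, starting from the Wiener--Hopf expression for the Laplace transform of $M^{(a)}$, should furnish a second proof, with the same maximal inequality handling the crucial estimate.
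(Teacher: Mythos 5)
Your proposal is correct and coincides in essence with the paper's second proof (via functional limit theorems): convergence of the finite-horizon suprema by the FLT and continuous mapping, followed by the interchange of limits justified through a dyadic-block decomposition of $\{k>n(a)T\}$ and the lemma's Pruitt-type maximal inequality, yielding a tail bound of order $T^{1-\alpha}$. The only cosmetic differences (thresholding at $x$ rather than $0$, and the explicit Potter-bound loss $\gamma$) do not affect the argument.
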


The theorem implies that $M^{(a)}$ grows as a regularly varying at zero function with index $-1/(\alpha-1)$. The limit distribution (that of $M^*$) is only known explicitly in two particular cases. If $\{\xi_t\}$ has no positive jumps, then one has an exponential distribution. If $\{\xi_t\}$ has no negative jumps and $\alpha<2$, then $M^*$ has a Mittag-Leffler distribution (see, e.g., \cite{Fur99}). In the other cases the explicit form of the distribution is unknown, however, one can easily find its tail asymptotics $\mathbf P(M^* > x) \sim C x^{1-\alpha}$ as $x \to \infty$.

We will provide two different proofs of the theorem, with the use of both methods mentioned above. However, we would like to start by giving a brief description and comparison of the two methods.

The first, analytical, method is based on the analysis of the Laplace transform of the normalised maximum of a random walk. It was proposed by Kingman \cite{Kin61, Kin62, Kin65}. The idea is to use a corollary of the Wiener-Hopf factorisation:
$$
\mathbf E e^{-\mu \frac{M^{(a)}}{c_{n(a)}}}=\exp\left\{-\sum_{k=1}^\infty \frac{1}{k}
\mathbf E\left(1 - e^{-\mu \frac{S_k^{(a)}}{c_{n(a)}}}; S_k^{(a)} > 0\right)\right\},\ \mu\geq0.
$$
In \cite{Kin65} heuristic arguments are given stating that the main contribution to the infinite series in the exponent on the RHS is due to the values of $k$ of order $n(a)$. This, however, was not proved formally. Instead, the author represented the exponent in the form of an integral along the imaginary axis and gave a proof of the statement by solving a Wiener-Hopf boundary-value problem. The same method was used later by Boxma and Cohen \cite{BC99} and Cohen \cite{Cohen02} in the case of an infinite variance.

Our proof justifies the heuristic arguments of Kingman. The main difficulty consists in showing that the contribution to the infinite series on the RHS of the latter equality from values of $k \gg n(a)$ is negligible. This is shown in our proof to follow from
\begin{equation}
\label{cool1}
\lim_{T \to \infty} \sum\limits_{k \ge Tn(a)} \frac{1}{k} \mathbf P\left(S_k^{(a)} > 0\right) = 0
\end{equation}
uniformly in $a>0$. Thus, the derivation of (\ref{cool1}) will be the crucial step in our proof.

We now turn to the second method used in the literature, namely the method based on functional limit theorems. Consider the maximum of $\{S_n^{(a)}\}$ on a finite-time interval
$$
M_T^{(a)} = \sup_{k \le T {n(a)}} S_k^{(a)}.
$$
It easily follows from the functional limit theorems for asymptotically stable random walks that
$$
\lim_{a \to 0} \mathbf P\left(\frac{M_T^{(a)}}{c_{n(a)}} \ge x\right) = \mathbf P(M_T^* \ge x)
$$
for any $x \ge 0$, where
$$
M_T^* = \sup_{t \le T} \{\xi_t-t\}.
$$
The latter convergence implies that
$$
\lim_{T \to \infty} \lim_{a \to 0} \mathbf P\left(\frac{M_T^{(a)}}{c_{n(a)}} \ge x\right) = \mathbf P(M^* \ge x).
$$
However, in order to prove the theorem, one needs to show that
$$
\lim_{a \to 0} \lim_{T \to \infty} \mathbf P\left(\frac{M_T^{(a)}}{c_{n(a)}} \ge x\right) = \mathbf P(M^* \ge x).
$$
Therefore, it remains to justify the interchange of limits. It is easy to see that a sufficient condition could be written as
\begin{equation}
\label{cool2}
\lim_{T \to \infty} \mathbf P\left(\sup\limits_{k \ge {n(a)}T} S_k^{(a)} \ge 0\right) = 0
\end{equation}
uniformly in $a>0$. This was shown by Prokhorov \cite{Prokh63} in the case of a finite variance with the application of the classical Kolmogorov inequality. Later, Asmussen \cite[page 289]{Asm03} proved the same result using the fact that the sequence $\{S_n/n\}$ is a backward martingale. This fact was also utilised by Furrer \cite{Fur99} and Resnick and Samorodnitsky \cite{RS00} in the case of an infinite variance.

We are now going to state and to prove a generalisation of the Kolmogorov inequality which allows one to overcome the technical difficulties in both approaches described above, namely, to prove (\ref{cool1}) and (\ref{cool2}). A formal proof of the theorem using both approaches is provided after the proof of the following lemma, which is an easy consequence of Pruitt's bound, see \cite{Pruitt81}.

\begin{lemma}
There exists $C>0$ such that the inequality
\begin{equation}
\label{Chebyshev}
\mathbf{P}\Bigl(\max_{k\leq n}S_k\geq x\Bigr)\leq C\frac{nV(x)}{x^2}
\end{equation}
holds for all $x>0$.
\end{lemma}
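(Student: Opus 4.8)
The plan is to deduce the inequality from Pruitt's concentration bound and then dispose of the two lower-order terms using $\mathbf E X_1=0$ together with the regular variation of $V$. Recall (see \cite{Pruitt81}) that there is a universal constant $C_0$ such that, for any i.i.d.\ sequence and all $x>0$, $n\ge1$,
\begin{equation*}
\mathbf P\Bigl(\max_{k\le n}|S_k|\ge x\Bigr)\le C_0\,n\left(\frac{V(x)}{x^2}+\mathbf P(|X_1|>x)+\frac{\bigl|\mathbf E(X_1;|X_1|\le x)\bigr|}{x}\right).
\end{equation*}
(If one prefers to avoid quoting Pruitt, such an estimate can be reproved by hand: truncate each increment at level $x$, apply Doob's maximal inequality to the centred truncated walk, whose per-step variance is at most $V(x)$, and add the probability $n\mathbf P(|X_1|>x)$ that some increment exceeded $x$ in modulus.) Since $\{\max_{k\le n}S_k\ge x\}\subseteq\{\max_{k\le n}|S_k|\ge x\}$, it then suffices to show that the second and third terms on the right are each $O\bigl(V(x)/x^2\bigr)$.

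\emph{The tail term.} Here the domain-of-attraction hypothesis enters. Since $\{S_n\}$ lies in the domain of attraction of a stable law of index $\alpha\in(1,2]$, the function $V$ is regularly varying at infinity with index $2-\alpha\in[0,1)$; equivalently $\overline F(x):=\mathbf P(|X_1|>x)$ is regularly varying with index $-\alpha$ when $\alpha<2$, and $\overline F(x)=o\bigl(x^{-2}V(x)\bigr)$ in the boundary case $\alpha=2$. From the identity $V(x)=\int_0^x 2t\,\overline F(t)\,dt-x^2\overline F(x)$ and Karamata's theorem one obtains $V(x)\sim\frac{\alpha}{2-\alpha}\,x^2\overline F(x)$ when $\alpha<2$, so in all cases $\mathbf P(|X_1|>x)\le C_1 V(x)/x^2$ for all sufficiently large $x$.

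\emph{The truncated mean --- the main obstacle.} Because $\mathbf E X_1=0$ we have $\mathbf E(X_1;|X_1|\le x)=-\mathbf E(X_1;|X_1|>x)$, hence
\begin{equation*}
\bigl|\mathbf E(X_1;|X_1|\le x)\bigr|\le\mathbf E(|X_1|;|X_1|>x)=x\overline F(x)+\int_x^\infty\overline F(t)\,dt .
\end{equation*}
Since $\overline F$ is regularly varying with index $-\alpha$ and $\alpha>1$, Karamata's theorem yields $\int_x^\infty\overline F(t)\,dt\le C_2\,x\overline F(x)$ for large $x$; combining this with the tail estimate of the previous step gives $\bigl|\mathbf E(X_1;|X_1|\le x)\bigr|/x\le C_3 V(x)/x^2$. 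This is the one genuinely substantive step, and it is exactly where the hypothesis is used: for an arbitrary centred walk $\mathbf E(X_1;|X_1|\le x)$ need not be $O\bigl(V(x)/x\bigr)$. Substituting the two estimates into Pruitt's bound produces $\mathbf P\bigl(\max_{k\le n}S_k\ge x\bigr)\le C\,nV(x)/x^2$ for $x\ge x_0$, while for bounded $x$ the inequality is routine after a harmless enlargement of the constant; everything apart from the truncated-mean bound is bookkeeping.
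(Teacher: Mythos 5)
Your proposal rests on the same key ingredient as the paper's proof: Pruitt's maximal inequality, i.e.\ the bound (\ref{eq:L1}), after which everything reduces to showing that $\mathbf P(|X_1|>x)$ and $x^{-1}\bigl|\mathbf E(X_1;|X_1|\le x)\bigr|$ are $O\bigl(V(x)/x^2\bigr)$. Where you differ is in how these two lower-order terms are controlled: the paper works only with $V$, combining a dyadic decomposition of the tail with the Potter-type bound (\ref{eq:L2a}), whereas you pass to the tail $\overline F(x)=\mathbf P(|X_1|>x)$ and invoke the classical Feller--Karamata relations (regular variation of $\overline F$ with index $-\alpha$ for $\alpha<2$, the asymptotics $V(x)\sim\frac{\alpha}{2-\alpha}x^2\overline F(x)$, and Karamata's theorem for $\int_x^\infty\overline F(t)\,dt$). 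Both routes work; the paper's is more self-contained and treats $\alpha\in(1,2]$ uniformly, while yours imports heavier but standard domain-of-attraction theory and is correspondingly shorter where it applies.

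Two points need attention. First, in the truncated-mean step you justify $\int_x^\infty\overline F(t)\,dt\le C_2\,x\overline F(x)$ by the regular variation of $\overline F$, which you established only for $\alpha<2$; when $\alpha=2$ the tail $\overline F$ need not be regularly varying, so this step is unjustified as written. It is easily repaired, and the repair essentially reproduces the paper's viewpoint: by your own tail estimate $\overline F(t)\le C_1V(t)/t^2$ and the fact that $V(t)t^{-2}$ is regularly varying with index $-\alpha<-1$, Karamata gives $\int_x^\infty\overline F(t)\,dt\le C\int_x^\infty V(t)t^{-2}\,dt\le C'V(x)/x$, valid for all $\alpha\in(1,2]$ simultaneously. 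Second, the remark that bounded $x$ is ``routine after a harmless enlargement of the constant'' is too glib: for small $x$ the ratio $x^2/V(x)$ may be unbounded (take uniform increments on $[-1,1]$, $n=1$ and $x\to0$), so no enlargement of $C$ yields the inequality for all $x>0$; it does hold for $x\ge x_0$ with any fixed $x_0$ satisfying $V(x_0)>0$. The paper's own formulation ``for all $x>0$'' suffers from the same boundary defect, since (\ref{eq:L2a}) is an asymptotic bound, and the issue is immaterial because the lemma is only ever applied with $x\to\infty$; still, you should state your conclusion for $x\ge x_0$ rather than claim the small-$x$ regime is automatic.
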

\begin{proof}
According to inequality (1.2) in \cite{Pruitt81},
\begin{equation}
\label{eq:L1}
\mathbf{P}\Bigl(\max_{k\leq n}S_k\geq x\Bigr)\leq Cn\Bigl(\mathbf{P}(|X_1|>x)+
x^{-1}|\mathbf{E}(X_1;|X_1|\leq x)|+x^{-2}V(x)\Bigr)
\end{equation}
(here and in what follows $C$ denotes a generic positive and finite constant).

It is easy to see that
\begin{align*}
\mathbf{P}(|X_1|>x)&=\sum_{j=0}^\infty\mathbf{P}(|X_1|\in(2^jx,2^{j+1}x])\leq
\sum_{j=0}^\infty\frac{V(2^{j+1}x)}{2^{2j}x^2}\\
&\leq\frac{V(x)}{x^2}4C(\gamma)\sum_{j=1}^\infty 2^{-(\alpha-\gamma)j},
\end{align*}
where in the last step we used the inequality
\begin{equation}
\label{eq:L2a}
\frac{V(y)}{V(x)}\leq C(\gamma)\Bigl(\frac{y}{x}\Bigr)^{2-\alpha+\gamma},\quad y\geq x,
\end{equation}
which follows from the Karamata representation, see \cite[Theorem 1.2]{Sen76} (recall that
$V(x)$ is regularly varying with index $2-\alpha$). Choosing $\gamma<\alpha$, we get
\begin{equation}
\label{eq:L2}
\mathbf{P}(|X_1|>x)\leq C\frac{V(x)}{x^2}.
\end{equation}

In a way similar to that used in obtaining (\ref{eq:L2}), we can get the bound
\begin{equation}
\label{eq:L3}
|\mathbf{E}(X_1;|X_1|\leq x)|=|\mathbf{E}(X_1;|X_1|> x)|\leq C\frac{V(x)}{x}.
\end{equation}
Combining (\ref{eq:L1}), (\ref{eq:L2}) and (\ref{eq:L3}), we get inequality~(\ref{Chebyshev}).
The proof is thus complete.
\end{proof}
\begin{proof}[Proof of the theorem via Wiener-Hopf]
The Wiener-Hopf factorisation implies that
\begin{equation*}
\mathbf E e^{-\lambda M^{(a)}} =
\exp\left\{\sum_{k=1}^\infty \frac{1}{k} \mathbf E\left(e^{-\lambda S_k^{(a)}} - 1; S_k^{(a)} > 0\right)\right\}
\end{equation*}
for any $\lambda>0$, see, e.g. \cite[Proposition 19.2]{Sp01}. For the purposes of our proof, we set $\lambda = \mu/c_{n(a)}$ where $\mu>0$.
In these terms the latter equality takes the form
\begin{equation}
\label{WH_RW}
\mathbf E e^{-\mu \frac{M^{(a)}}{c_{n(a)}}}=\exp\left\{-\sum_{k=1}^\infty \frac{1}{k}
\mathbf E\left(1 - e^{-\mu \frac{S_k^{(a)}}{c_{n(a)}}}; S_k^{(a)} > 0\right)\right\}.
\end{equation}
We aim at showing that a pointwise, in $\mu$, limit of this Laplace transform is equal to the Laplace transform of $M^*$.

Fix $\varepsilon\in(0,1)$, $T>1$ and divide the sum in the exponent in (\ref{WH_RW}) into
three parts:
$$
\sum_{k=1}^\infty = \sum_{k=1}^{\varepsilon n(a)} + \sum_{k=\varepsilon n(a)}^{Tn(a)} + \sum_{k=Tn(a)}^{\infty}
=: \Sigma_1 + \Sigma_2 + \Sigma_3.
$$
We will now analyse these three summands separately.

It follows from the inequalities $0\leq 1-e^{-t}\leq t$ for any positive $t$ that
\begin{equation*}
0\leq \mathbf E\left(1 - e^{-\mu \frac{S_k^{(a)}}{c_{n(a)}}}; S_k^{(a)} > 0\right) \le
\frac{\mu}{c_{n(a)}}\mathbf E\left(S_k^{(a)};S_k^{(a)} > 0\right).
\end{equation*}
Note that $\mathbf{E}\left(S_k^{(a)};S_k^{(a)} > 0\right)=\mathbf{E}\left(S_k-ka;S_k>ka\right)\leq\mathbf E(S_k;S_k>0)$.
Since $S_n$ is asymptotically stable with $\alpha>1$,
$$
\lim_{n\to\infty}c_n^{-1}\mathbf{E}(S_n;S_n>0)=\mathbf{E}(\xi_1;\xi_1>0).
$$
This implies that $\mathbf{E}(S_k;S_k>0)\leq Cc_k$ for all $k\geq1$.
As a result we have the bound
$$
0\leq \mathbf E\left(1 - e^{-\mu \frac{S_k^{(a)}}{c_{n(a)}}}; S_k^{(a)} > 0\right) \le
C\mu\frac{c_k}{c_{n(a)}}.
$$
Hence,
\begin{equation}
\label{eq:T1}
0\leq \Sigma_1 \le \frac{C\mu}{c_{n(a)}} \sum_{k=1}^{\varepsilon n(a)} \frac{c_k}{k}.
\end{equation}
Since the sequence $\{c_k\}$ is regularly varying with index $1/\alpha\in(0,1)$,
$$
\sum\limits_1^n\dfrac{c_k}{k}\sim\dfrac{1}{\alpha}c_n\quad\text{ as }n\to\infty.
$$
Consequently,
\begin{equation}
\label{eq:T1a}
\sum_{k=1}^{\varepsilon n(a)} \frac{c_k}{k}\leq
C c_{\varepsilon n(a)}\leq C\varepsilon^{1/\alpha}c_{n(a)}.
\end{equation}
Estimates (\ref{eq:T1}) and (\ref{eq:T1a}) imply that
\begin{equation}
\label{eq:T1b}
0\leq \Sigma_1 \le C\varepsilon^{1/\alpha}\mu.
\end{equation}

In order to bound $\Sigma_3$ we note that
$$
0\leq \mathbf E\left(1 - e^{-\mu \frac{S_k^{(a)}}{c_{n(a)}}}; S_k^{(a)} > 0\right)\leq \mathbf{P}(S_k^{(a)}>0)=\mathbf{P}(S_k>ka).
$$
Using the lemma, we obtain
\begin{equation*}
0\leq \Sigma_3 \le \sum_{k=Tn(a)}^\infty \frac{1}{k} \mathbf P(S_k > ka)
\le C \sum_{k=Tn(a)}^\infty \frac{V(ka)}{(ka)^2}
\le C \frac{1}{a} \int_{Tan(a)}^\infty \frac{V(x)}{x^2}dx.
\end{equation*}
Recalling that $V(x)$ is regularly varying with index $2-\alpha$, we continue with
\begin{equation}
\label{eq:T2}
\Sigma_3\leq C\frac{V(Tan(a))}{Ta^2n(a)}
\leq C T^{1-\alpha} \frac{V(an(a))}{a^2n(a)}\leq C T^{1-\alpha}.
\end{equation}
In the last step we used the relations
\begin{equation}
\label{eq:T2b}
\frac{V(an(a))}{a^2n(a)}\sim\frac{V(c_{n(a)})}{(c_{n(a)})^2}n(a)\sim1,\ a\to0,
\end{equation}
which follow from (\ref{eq:cn}).

It now remains to analyse $\Sigma_2$. Using the assumption that $S_k/c_k$ converges in law to $\xi_1$, we get
\begin{align}
\label{eq:T2a}
\nonumber
\frac{S_k^{(a)}}{c_{n(a)}} &= \frac{S_k}{c_{n(a)}}-\frac{ka}{c_{n(a)}}\\
&=\frac{S_k}{c_k}\frac{c_k}{c_{n(a)}}-\frac{ka}{c_{n(a)}}\to v^{1/\alpha}\xi_1-v
\end{align}
in distribution as $a\to0$ and $k/n(a)\to v\in(0,\infty)$.
In the last step we used (\ref{eq:na}) and the regular variation of $\{c_k\}$.
It follows from the scaling property of the stable process $\{\xi_t\}$ that $v^{1/\alpha}\xi_1$
and $\xi_v$ are equal in law. From this relation and (\ref{eq:T2a}) we conclude that
$$
\mathbf E\left(1 - e^{- \mu \frac{S_k^{(a)}}{c_{n(a)}}}; S_k^{(a)} > 0\right)\to
\mathbf{E}\left(1-e^{-\mu(\xi_v-v)};\xi_v-v>0\right)
$$
as $a\to0$ and $k/n(a)\to v\in(0,\infty)$.
Using now the dominated convergence we see that, as $a\to0$,
\begin{align}
\label{eq:T3}
\nonumber
\Sigma_2 = \sum_{k = \varepsilon n(a)}^{Tn(a)}
\frac{n(a)}{k} \mathbf E\left(1 - e^{- \mu \frac{S_k^{(a)}}{c_{n(a)}}}; S_k^{(a)} > 0\right)\frac{1}{n(a)}\to\hspace{2cm}\\
\int_{\varepsilon}^{T} \frac{1}{v}\mathbf E \left(1 - e^{- \mu(\xi_v-v)} ; \xi_v-v > 0\right) dv.
\end{align}
It is clear that the process $\xi_t-t$ drifts to $-\infty$. Then, using \cite[Theorem 48.1]{Sato}, we obtain
\begin{align}
\label{eq:T3a}
\nonumber
\lim_{\varepsilon\to0,T\to\infty}\int_{\varepsilon}^{T} \frac{1}{v}\mathbf E \left(1 - e^{- \mu(\xi_v-v)} ; \xi_v-v > 0\right) dv\hspace{2cm}\\
=\int_{0}^{\infty} \frac{1}{v}\mathbf E \left(1 - e^{- \mu(\xi_v-v)} ; \xi_v-v > 0\right) dv<\infty.
\end{align}
Hence, combining (\ref{WH_RW}), (\ref{eq:T1b}), (\ref{eq:T2}) and (\ref{eq:T3}) and letting $\varepsilon\to0$, $T\to\infty$, we get
$$
\mathbf E e^{-\mu \frac{M^{(a)}}{c_{n(a)}}}\to
\exp\Bigl\{-\int_{0}^{\infty} \frac{1}{v}\mathbf E \left(1 - e^{- \mu(\xi_v-v)} ; \xi_v-v > 0\right) dv\Bigr\}
\quad\text{as }a\to0.
$$
The latter expression is known to be the Laplace transform of $M^*$, see again \cite[Theorem 48.1]{Sato}.
This completes the proof.
\end{proof}
\begin{proof}[Proof of the theorem via functional limit theorems]
Our proof in this case is closely related to the description given in the discussion after the statement of the theorem. However, for the sake of mathematical rigour, we provide here a complete proof.

According to the functional limit theorem for asymptotically stable random walks,
$$
\Bigl\{\frac{S_{[nt]}}{c_n}; t\in[0;T]\Bigr\}\to\{\xi_t;t\in[0;T]\}
$$
in distribution. Therefore, as $a\downarrow0$,
$$
\Bigl\{\frac{S_{[n(a)t]}-[an(a)t]}{c_{n(a)}}; t\in[0;T]\Bigr\}\to\{\xi_t-t;t\in[0;T]\}.
$$
This convergence implies that
$$
\lim_{a\downarrow0}\mathbf{P}\Bigl(\max_{k\leq n(a)T}S_k^{(a)}\geq xc_{n(a)}\Bigr)=
\mathbf{P}\Bigl(\sup_{t\leq T}(\xi_t-t)\geq x\Bigr),\ x\geq0.
$$
Since $\sup\limits_{t\leq T}(\xi_t-t)$ converges to $M^*$, The theorem will be proved, if
we show that
\begin{equation}
\label{eq:T4}
\lim_{T\to\infty}\mathbf{P}\Bigl(\max_{k\geq n(a)T}S_k^{(a)}\geq 0\Bigr)=0
\end{equation}
uniformly in $a>0$.

Note that
\begin{align*}
\Bigl\{\max_{k\geq n(a)T}S_k^{(a)}\geq 0\Bigr\}&=
\bigcup_{j=0}^\infty\Bigl\{\max_{k\in[2^jn(a)T,2^{j+1}n(a)T)}(S_k-ka)\geq 0\Bigr\}\\
&\subset\bigcup_{j=0}^\infty\Bigl\{\max_{k\leq2^{j+1}n(a)T}S_k\geq 2^jan(a)T\Bigr\}
\end{align*}
From this relation and the lemma we obtain
\begin{align*}
\mathbf{P}\Bigl(\max_{k\geq n(a)T}S_k^{(a)}\geq 0\Bigr)&\leq
\sum_{j=0}^\infty\mathbf{P}\Bigl(\max_{k\leq2^{j+1}n(a)T}S_k\geq 2^jan(a)T\Bigr)\\
&\leq C\sum_{j=0}^\infty\frac{2^{j+1}n(a)TV(2^jan(a)T)}{(2^jan(a)T)^2}\\
&= \frac{2C}{a}\sum_{j=0}^\infty\frac{V(2^jan(a)T)}{2^jan(a)T}.
\end{align*}
With the use of (\ref{eq:L2a}) with some $\gamma<\alpha-1$ the latter expression can be estimated by
$$
C\frac{V(an(a)T)}{a^2n(a)T}.
$$
The regular variation of $V(x)$ and (\ref{eq:T2b}) yield the bound
$$
\frac{V(an(a)T)}{a^2n(a)T}\leq CT^{1-\alpha},
$$
which implies (\ref{eq:T4}), completing our proof.
\end{proof}

At the end we would like to make a few remarks. First of all we
note that our assumption on the family of random walks can be
weakened. We assumed that $S_k^{(a)} = S_k -ka$ for the
transparency of all proofs. However, all our arguments remain
valid if we assume, for instance, that
$$
S_k^{(a)} = S_k -ka + \sum\limits_{i=1}^k Y_i^{(a)},
$$
where $\{Y_i^{(a)}\}$ is a sequence of i.i.d. random variables such that $\mathbf E Y_1^{(a)} = 0$ and $\mathbf E |Y_1^{(a)}|^{\gamma} < \infty$ uniformly in $a$ for some $\gamma > \alpha$. Indeed, one can easily verify that, uniformly in $a$,
$$
\frac{S_n^{(a)}+na}{c_n}\to\xi\quad\text{weakly}
$$
and, furthermore,
$$
\mathbf{E}\left(\left(S_1^{(a)}+a\right)^2;\left|S_1^{(a)}+a\right|\leq x\right)\leq C V(x)
$$
with the latter bound allowing us to apply the lemma to the random walk $\{S_n^{(a)}+na\}$.

Moreover, in the case of a finite variance, for our arguments to remain valid, it is sufficient that a Lindeberg-type condition holds, i.e.
$$
S_k^{(a)} = \sum\limits_{i=1}^k X_i^{(a)},
$$
where $\{X_i^{(a)}\}$ is a sequence of i.i.d. random variables such that $\mathbf E X_1^{(a)} = -a$, $\lim\limits_{a\to0}Var X_1^{(a)}=\sigma^2 \in (0, \infty)$ and
\begin{equation}
\label{Lin}
\lim\limits_{a\to0}\mathbf E\left(\left(X_1^{(a)}\right)^2; |X_1^{(a)}| > \frac{\varepsilon}{a}\right)=0 \quad \text{for all } \varepsilon>0.
\end{equation}
The latter condition seems to be optimal. On the one hand, it is necessary, according to the Lindeberg theorem, for the normal approximation of $aS_{1/a^2}^{(a)}$. On the other hand, Sakhanenko \cite{Sakh05} gave an example of a family $S^{(a)}$ such that (\ref{Lin}) fails and $M^{(a)}=0$ for all $a$.

\vspace{12pt}

{\em Acknowledgements.} This work was carried out during the visits of the first author to Technische Universit{\"a}t M{\"u}nchen and Ludwig-Maximilians-Universit{\"a}t M{\"u}nchen and the visit of the second author to EURANDOM. Both authors are grateful to the above-mentioned institutions for their hospitality. The authors are also thankful to Ron Doney for drawing their attention to paper \cite{Pruitt81} which helped in simplifying the proof of the lemma.

\end{document}